\newtheorem{theorem}{Theorem}[section]
\newtheorem{lemma}[theorem]{Lemma}
\begin{document}

\title{Using Fibonacci factors to create Fibonacci pseudoprimes}

\author{Junhyun Lim}
\email{ limjunhyun@gmail.com}
\address{Department of Mathematics and
Computer Science, Santa Clara University, Santa Clara, CA 95053,
USA.}

\author{Shaunak Mashalkar}
\email{ssmash724@gmail.com}
\address{Department of Mathematics and
Computer Science, Santa Clara University, Santa Clara, CA 95053,
USA.}

\author{Edward F.~Schaefer}
\email{eschaefer@scu.edu}
\address{Department of Mathematics and
Computer Science, Santa Clara University, Santa Clara, CA 95053,
USA.}

\subjclass[2010]{Primary 11B39; Secondary 11A51}

\begin{abstract}
Carmichael showed  for sufficiently large $L$,
that $F_L$ has at least one prime divisor that is $\pm 1({\rm mod}\, L)$.
For a given $F_L$, we will show that a product of distinct
odd prime divisors with
that congruence condition is a Fibonacci pseudoprime. Such pseudoprimes can be used in an attempt, here unsuccessful, to find an example
of a Baillie-PSW pseudoprime, i.e.\ an odd Fibonacci pseudoprime that is congruent
to $\pm 2({\rm mod}\, 5)$ and is also a base-2 pseudoprime.
\end{abstract}

\thanks{The authors are grateful to Carl Pomerance for challenging us to the \$620 problem and useful conversations.}

\maketitle

\section{Introduction}

For all odd prime numbers $p$ we have $p|F_{p-(\frac{5}{p})}$, where
$(\frac{5}{p})$ is the Legendre symbol. This is well-known
and can be proved using the lemmas in Section~\ref{Proof}.
An odd composite integer $n$ is said to be a Fibonacci pseudoprime if $n|F_{n-(\frac{5}{n})}$ where $(\frac{5}{n})$ is the Jacobi symbol, which generalizes
the Legendre symbol.
Let us list the six smallest Fibonacci pseudoprimes, their prime factorizations,
and the smallest positive Fibonacci number that each divides. We have
$323=17\cdot 19|F_{18}$,
$377=13\cdot 29|F_{14}$,
$1891=31\cdot 61|F_{30}$,
$3827=43\cdot 89|F_{44}$,
$4181=37\cdot 113|F_{19}$, and
$5777=53\cdot 109|F_{27}$.
For each of the six smallest Fibonacci pseudoprimes $n$, if $F_L$ is the smallest
positive Fibonacci
number for which $n$ is a divisor (i.e.\ $L={\rm ord}_f(n)$), then the prime divisors of $n$ are each $\pm 1({\rm mod}\, L)$. This is not always the case. In fact the seventh smallest Fibonacci pseudoprime
is $6601 = 7\cdot 23\cdot 41|F_{120}$ and none of its prime divisors are $\pm 1({\rm mod}\, 120)$.
Nevertheless, we can use the above observation to create many Fibonacci pseudoprimes.
We construct them using the theorem below.

\begin{theorem}
\label{MainTheorem}
Let $L$ be a positive integer. Let $p_1, \ldots , p_k$, for some $k\geq 2$, be
distinct  odd primes dividing
$F_L$ with the property that for each $i$ we have $p_i\equiv \pm 1({\rm mod}\, L)$, assuming
at least two such primes exist.
Then $P:=\prod_{i=1}^k p_i$ is a Fibonacci pseudoprime.
\end{theorem}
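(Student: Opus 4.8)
The plan is to push everything onto the rank of apparition. For a positive integer $n$ write $\alpha(n)$ for its rank of apparition, the least $m\geq 1$ with $n\mid F_m$ (this is ${\rm ord}_f(n)$ in the notation above). The standard divisibility lemmas for Fibonacci numbers give that $\alpha(n)$ exists, that $n\mid F_m$ iff $\alpha(n)\mid m$, and hence that a squarefree $n=\prod_i p_i$ with the $p_i$ distinct primes satisfies $n\mid F_m$ iff $\alpha(p_i)\mid m$ for every $i$. Since $P$ is a product of $k\geq 2$ distinct odd primes it is automatically odd and composite, so it remains only to prove $P\mid F_{P-(\frac5P)}$, where $(\frac5P)=\prod_i(\frac5{p_i})$ is the Jacobi symbol. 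By the facts just quoted this amounts to $\alpha(p_i)\mid P-(\frac5P)$ for each $i$; since $p_i\mid F_L$ gives $\alpha(p_i)\mid L$, it suffices to prove the single divisibility $L\mid P-(\frac5P)$.

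The heart of the matter is the lemma: \emph{if $p$ is an odd prime with $p\mid F_L$, $L\geq 3$, and $p\equiv\varepsilon\pmod L$ with $\varepsilon\in\{1,-1\}$, then $\left(\frac5p\right)=\varepsilon$.} (Note $L\geq 3$ is automatic once two admissible primes exist, since $F_1=F_2=1$; note also $p\neq 5$, since $5\mid F_L$ would force $5\mid L$ and then $5\not\equiv\pm1\pmod L$, so $\left(\frac5p\right)=\pm1$ indeed makes sense.) To prove it, combine $\alpha(p)\mid L$ with $\alpha(p)\mid p-\left(\frac5p\right)$, which comes from the known relation $p\mid F_{p-(\frac5p)}$, to get $\alpha(p)\mid\gcd\bigl(L,\,p-(\tfrac5p)\bigr)$. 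If $\left(\frac5p\right)$ were $-\varepsilon$, then $p-\left(\frac5p\right)=p+\varepsilon\equiv 2\varepsilon\pmod L$, whence $\gcd\bigl(L,\,p-(\tfrac5p)\bigr)=\gcd(L,2)\leq 2$ and $\alpha(p)\leq 2$; but no prime divides $F_1$ or $F_2$, so $\alpha(p)\geq 3$, a contradiction.

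Granting the lemma, the theorem follows by a short count. Let $S=\{\,i:p_i\equiv-1\pmod L\,\}$; since $L\geq 3$ we have $1\not\equiv-1\pmod L$, so every index outside $S$ satisfies $p_i\equiv 1\pmod L$. Then $P=\prod_i p_i\equiv(-1)^{|S|}\pmod L$, while the lemma gives $\left(\frac5P\right)=\prod_i\left(\frac5{p_i}\right)=(-1)^{|S|}$. Hence $P\equiv\left(\frac5P\right)\pmod L$, i.e.\ $L\mid P-\left(\frac5P\right)$, and by the reduction above $P$ is a Fibonacci pseudoprime.

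I expect the lemma to be the only real obstacle; everything else is bookkeeping with Fibonacci divisibility and the multiplicativity of the Jacobi symbol. Within the lemma the one genuine idea is that $\alpha(p)$ divides \emph{both} $L$ and $p-\left(\frac5p\right)$, after which the coprimality squeeze $\gcd(L,p\pm 2)=\gcd(L,2)$ finishes it; the only point needing a moment's care is checking that the degenerate cases ($L\leq 2$, or some $p_i$ equal to $5$) are excluded by the hypotheses rather than requiring separate treatment.
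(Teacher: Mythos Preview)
Your proof is correct and follows essentially the same route as the paper: isolate the key lemma that each admissible prime satisfies $p_i\equiv\bigl(\tfrac{5}{p_i}\bigr)\pmod L$, then multiply and use $F_L\mid F_{P-(5/P)}$. The only difference is cosmetic: the paper proves the lemma by a case split on $p\pmod 5$ and the ``consecutive Fibonacci numbers'' trick ($p\mid F_{p-1}$ and $p\mid F_{p+1}$ force $p\mid F_p$, hence $p\mid F_1$), whereas your gcd squeeze $\alpha(p)\mid\gcd(L,2)\leq 2$ reaches the same contradiction without splitting cases.
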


It does seem quite common for prime divisors of $F_L$ to be  $\pm 1({\rm mod}\, L)$. For example, four of the prime divisors of $F_{100}$ are 101, 401, 3001, and 570601 and
$F_{7560}$ has at least 29 prime divisors that are $\pm 1({\rm mod}\, 7560)$
(note $F_{7560}$ has a composite factor with 711 decimal digits that is still unfactored).
All Fibonacci numbers up to $F_{1408}$
have been factored  completely
and complete or partial factorizations of $F_L$ for
$1409\leq L\leq 9999$ have been given
(see \cite{FF}).
For $1\leq L \leq 1408$, the average number of odd prime divisors of $F_L$ that are $\pm 1({\rm mod}\, L)$ is $\frac{7279}{1408}\approx 5.17$.

Applying Theorem~\ref{MainTheorem} to the fully and partially factored Fibonacci numbers $F_L$ for $1\leq L\leq 9999$, we can create
approximately $2^{31}$  Fibonacci pseudoprimes. However they will not all be distinct.
For example, $F_{19}=4181=37\cdot 113$. Not only are both prime divisors $\pm 1({\rm mod}\, 19)$,
they are both  $\pm 1({\rm mod}\, 38)$. So the Fibonacci pseudoprime 4181 will appear
for both $L=19$ and $L=38$.

One reason for the appearance of such prime divisors of Fibonacci numbers comes
from  \cite[Thm.\ XXVI]{Ca}. Carmichael proved that for every  $L\neq 1, 2, 5, 6, 12$ there is a prime divisor $p\equiv \pm 1({\rm mod}\, L)$ of
$F_L$ which divides no $F_K$ for $K<L$.
We leave it to the analytic number theorists to study the expected number of prime divisors
of $F_L$ that are $\pm 1({\rm mod}\, L)$ as a function of $L$.

In Section~\ref{Proof} we  prove Theorem~\ref{MainTheorem}.
It provides a new way of creating Fibonacci pseudoprimes and thus can be
added to the list of methods for constructing them found in  \cite{CG,DFM,Le,Pa,Ro,SW}.
There is much interest in finding an integer that is an odd Fibonacci
pseudoprime, congruent to $\pm 2({\rm mod}\, 5)$, and simultaneously a base-2 pseudoprime. These are sometimes referred to as Baillie-PSW pseudoprimes.
There is a heuristic argument that an example exists (see \cite{CG}, which refers
to ideas in \cite{Po1}).  However, no example
is known and there is a \$620 prize (payable by Carl Pomerance, Sam Wagstaff, and the
Number Theory Foundation, see \cite{Po2}) for those who either find an example, or prove
that none exists.
In Section~\ref{620} we
discuss how we used our theorem in our unsuccessful attempt at finding an example.

\section{Proof of the  Theorem}
\label{Proof}

We will use the following well-known lemmas.
Lemmas~\ref{pdivideFibo1} and \ref{pdivideFibo2} are proven  in \cite[p.\ 297]{Lu}

\begin{lemma}
\label{pdivideFibo1}
Let $p\equiv \pm 1({\rm mod}\, 5)$ be prime. Then $p|F_{p-1}$.\end{lemma}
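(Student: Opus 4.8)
The plan is to use the Binet-type closed form for the Fibonacci numbers together with Fermat's little theorem, reducing everything modulo $p$. Set $\alpha=(1+\sqrt5)/2$ and $\beta=(1-\sqrt5)/2$, the two roots of $x^2-x-1$, so that $F_n=(\alpha^n-\beta^n)/(\alpha-\beta)$ for all $n$, with $\alpha+\beta=1$ and $\alpha\beta=-1$. First I would note that the hypothesis $p\equiv\pm1\pmod5$ is exactly the statement, via quadratic reciprocity (since $5\equiv1\pmod4$), that $\left(\frac5p\right)=1$, i.e.\ that $5$ is a nonzero square in $\F_p$; in particular $p\neq5$. Since $p$ is odd, $2$ is also invertible mod $p$, so $\alpha$ and $\beta$ make sense as elements of $\F_p$, and $\alpha-\beta=\sqrt5$ is nonzero there.

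Next I would observe that $\alpha$ and $\beta$ are units in $\F_p$, because their product is $-1\neq0$. Fermat's little theorem then gives $\alpha^{p-1}\equiv\beta^{p-1}\equiv1\pmod p$, and since division by $\alpha-\beta$ is legitimate,
\[
F_{p-1}=\frac{\alpha^{p-1}-\beta^{p-1}}{\alpha-\beta}\equiv\frac{1-1}{\sqrt5}=0\pmod p,
\]
which is the claim. The one bookkeeping point is to justify that the integer $F_{p-1}$ really does reduce to this expression in $\F_p$: expand $(\alpha^{n}-\beta^{n})/(\alpha-\beta)$ as the standard integer polynomial in the symmetric functions $\alpha+\beta=1$ and $\alpha\beta=-1$, and reduce that identity mod $p$.

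An equivalent route that avoids square roots uses the matrix $M=\left(\begin{smallmatrix}1&1\\1&0\end{smallmatrix}\right)$, for which $M^n=\left(\begin{smallmatrix}F_{n+1}&F_n\\F_n&F_{n-1}\end{smallmatrix}\right)$. Because $5$ is a nonzero square mod $p$, the characteristic polynomial $x^2-x-1$ splits into two distinct linear factors over $\F_p$ (distinct since its discriminant $5$ is nonzero), so $M$ is diagonalizable over $\F_p$ with eigenvalues in $\F_p^{\times}$; each eigenvalue has multiplicative order dividing $p-1$, hence $M^{p-1}\equiv I\pmod p$, and the upper-right entry yields $p\mid F_{p-1}$.

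I do not expect a real obstacle, as the statement is classical; the only things needing a word of care are that the congruence hypothesis is precisely what places $\sqrt5$ in $\F_p$ (equivalently, splits $x^2-x-1$), and that the excluded prime $p=5$ is automatically avoided since $5\not\equiv\pm1\pmod5$. Everything else is just Fermat's little theorem applied to the two units $\alpha$ and $\beta$.
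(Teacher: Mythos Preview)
Your argument is correct. The paper does not supply its own proof of this lemma but merely cites Lucas~\cite{Lu}; your Binet--formula argument (and the equivalent matrix diagonalization) is precisely the classical route---the hypothesis $p\equiv\pm1\pmod5$ is used exactly to place $\sqrt5$ in $\F_p$, after which Fermat's little theorem on the units $\alpha,\beta$ finishes it---so there is no substantive difference to discuss.
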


\begin{lemma}
\label{pdivideFibo2}
Let $p\equiv \pm 2({\rm mod}\, 5)$ be prime. Then $p|F_{p+1}$.\end{lemma}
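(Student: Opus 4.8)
The plan is to compute in the field $\F_{p^2}$ by means of the Binet-type closed form for the Fibonacci numbers. Since $F_3=2$, the case $p=2$ is immediate, so assume $p$ is odd; then the hypothesis $p\equiv\pm2({\rm mod}\, 5)$ is equivalent to $(\frac{5}{p})=-1$, that is, to $5$ being a nonzero non-square modulo $p$, and in particular $p\neq5$. The discriminant of $x^2-x-1$ is $5$, so this polynomial is irreducible over $\F_p$ and splits in $\F_{p^2}$ with two distinct roots $\phi,\psi$, which satisfy $\phi+\psi=1$, $\phi\psi=-1$, and $\phi-\psi\neq0$.

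First I would record how Frobenius acts: the map $x\mapsto x^p$ is an automorphism of $\F_{p^2}/\F_p$ that permutes the root set $\{\phi,\psi\}$, and it cannot fix $\phi$ (otherwise $\phi\in\F_p$, contradicting irreducibility), so $\phi^p=\psi$ and $\psi^p=\phi$. Next I would check that the reduction modulo $p$ of the integer $F_n$ equals $(\phi^n-\psi^n)/(\phi-\psi)$ in $\F_{p^2}$: this element is well defined because $\phi\neq\psi$, and, regarded as a sequence in $n$, it satisfies $a_{n+1}=a_n+a_{n-1}$ with $a_0=0$ and $a_1=1$ (using $\phi^2=\phi+1$, $\psi^2=\psi+1$), hence agrees term by term with $F_n\bmod p$.

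The conclusion is then a one-line computation: $\phi^{p+1}=\phi\cdot\phi^p=\phi\psi=-1$ and likewise $\psi^{p+1}=\psi\cdot\psi^p=\psi\phi=-1$, so $\phi^{p+1}=\psi^{p+1}$ and therefore $F_{p+1}\equiv(\phi^{p+1}-\psi^{p+1})/(\phi-\psi)=0$ in $\F_{p^2}$; since $F_{p+1}$ is a rational integer, this forces $p\mid F_{p+1}$. (The same argument gives Lemma~\ref{pdivideFibo1}: when $(\frac{5}{p})=1$ one has $\phi,\psi\in\F_p$, so $\phi^p=\phi$, $\psi^p=\psi$, and the computation yields $F_{p-1}\equiv0$.) The only step calling for care is the identification of the integer Fibonacci sequence with its Binet form over $\F_{p^2}$, together with the bookkeeping that excludes $p=5$ and treats $p=2$ separately; everything else is automatic. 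A fully elementary variant avoids finite fields altogether: expanding $(1\pm\sqrt5)^p$ by the binomial theorem, killing the interior coefficients $\binom{p}{k}$ with $0<k<p$ modulo $p$, and applying Euler's criterion and Fermat's little theorem gives $F_p\equiv(\frac{5}{p})\equiv-1({\rm mod}\, p)$ and $L_p\equiv1({\rm mod}\, p)$ for the Lucas numbers $L_n$; then the standard identity $2F_{p+1}=F_p+L_p$ yields $2F_{p+1}\equiv0({\rm mod}\, p)$, hence $p\mid F_{p+1}$.
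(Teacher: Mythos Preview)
Your argument is correct. Both routes you outline---the computation in $\F_{p^2}$ via the Binet form and the action of Frobenius, and the elementary variant via the binomial expansion of $(1\pm\sqrt5)^p$ together with the identity $2F_{p+1}=F_p+L_p$---are standard and sound. The only minor quibble is cosmetic: the parenthetical sketch of Lemma~\ref{pdivideFibo1} and the second ``elementary variant'' are not needed for the statement at hand, so in a final write-up you might trim them.

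As for comparison with the paper: the paper does not supply its own proof of this lemma at all. It simply records the statement as well known and cites Lucas \cite[p.~297]{Lu}. So you have in fact provided more than the paper does. Your finite-field approach is the modern packaging of Lucas's original argument (Lucas works with the sequences $U_n,V_n$ and the quadratic $x^2-x-1$, which amounts to the same computation without the language of $\F_{p^2}$ and Frobenius); there is no substantive difference in content, only in presentation.
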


Lemma~\ref{Fibodivide} is proven in \cite[p.\ 35]{Ha}.

\begin{lemma}
\label{Fibodivide}
Let $m,n$ be positive integers. If $m|n$ then $F_m|F_n$.
\end{lemma}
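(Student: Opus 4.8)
The plan is to reduce the statement to the Fibonacci addition formula and then induct on the multiple. Since $m \mid n$, write $n = km$ for some positive integer $k$; I would prove by induction on $k$ that $F_m \mid F_{km}$, which is exactly the claim.

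First I would record the addition identity
\[
F_{a+b} = F_a F_{b+1} + F_{a-1} F_b,
\]
valid for all positive integers $a,b$ (with the conventions $F_0 = 0$ and $F_1 = 1$). This is itself a well-known fact, provable by a routine induction on $b$ from the recurrence $F_{j+1} = F_j + F_{j-1}$, or by reading off the lower-left entry of the matrix identity $\left(\begin{smallmatrix} 1 & 1 \\ 1 & 0\end{smallmatrix}\right)^{a+b} = \left(\begin{smallmatrix} 1 & 1 \\ 1 & 0\end{smallmatrix}\right)^{a}\left(\begin{smallmatrix} 1 & 1 \\ 1 & 0\end{smallmatrix}\right)^{b}$.

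With the identity available, the induction is immediate. The base case $k = 1$ is the trivial divisibility $F_m \mid F_m$. For the inductive step, assume $F_m \mid F_{km}$ and apply the addition formula with $a = km$ and $b = m$ to obtain $F_{(k+1)m} = F_{km}F_{m+1} + F_{km-1}F_m$. The first summand is a multiple of $F_m$ by the inductive hypothesis, and the second summand is visibly a multiple of $F_m$; hence $F_m \mid F_{(k+1)m}$, completing the induction.

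There is no genuine obstacle here: all of the content sits in the addition formula, and once that is granted the divisibility drops out of a single line. The only point requiring care is the index bookkeeping—ensuring $F_{km-1}$ makes sense, which it does since $k,m \geq 1$ forces $km \geq 1$—so that the formula may legitimately be applied at every step of the induction.
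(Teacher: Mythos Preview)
Your argument is correct: the addition formula $F_{a+b}=F_aF_{b+1}+F_{a-1}F_b$ together with induction on $k$ in $n=km$ is the standard and entirely rigorous way to obtain $F_m\mid F_n$. The paper itself does not supply a proof of this lemma at all; it simply cites \cite[p.~35]{Ha}, so there is no in-paper argument to compare against, but the proof you have written is essentially the classical one found in such references.
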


Lemma~\ref{WhichFibDivByp} is proven in \cite[Thm.\ 3]{Wa}.

\begin{lemma}
\label{WhichFibDivByp}
 Let $n$ be a positive integer and let  $L$ be an integer such that $n|F_L$. Then ${\rm ord}_f(n)|L$.
\end{lemma}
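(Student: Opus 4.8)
The plan is to exploit the definition of $d := {\rm ord}_f(n)$ as the least positive integer with $n \mid F_d$ (the hypothesis $n \mid F_L$ guarantees this set is nonempty), combined with the Fibonacci addition formula and the coprimality of consecutive Fibonacci numbers. First I would set up a division-algorithm reduction. We may assume $L$ is positive; since $n \mid F_L$, minimality of $d$ forces $L \ge d$, so I write $L = qd + r$ with $q \ge 1$ and $0 \le r < d$. The entire statement then reduces to proving $r = 0$.

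The crucial step is to pass to the congruence $F_L \equiv F_{qd-1}\,F_r \pmod{n}$. This follows from the addition identity $F_{m+s} = F_m F_{s+1} + F_{m-1} F_s$ applied with $m = qd$ and $s = r$, which gives $F_L = F_{qd}\,F_{r+1} + F_{qd-1}\,F_r$. By Lemma~\ref{Fibodivide}, $d \mid qd$ yields $F_d \mid F_{qd}$, and since $n \mid F_d$ we obtain $n \mid F_{qd}$; thus the first summand vanishes modulo $n$ and the claimed congruence holds. Because $n \mid F_L$, this congruence immediately gives $n \mid F_{qd-1}\,F_r$.

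Next I would isolate $F_r$ by cancelling $F_{qd-1}$, which requires $\gcd(n, F_{qd-1}) = 1$. I would establish first that consecutive Fibonacci numbers are coprime, via the one-line induction $\gcd(F_k, F_{k+1}) = \gcd(F_k, F_{k+1}-F_k) = \gcd(F_k, F_{k-1}) = \cdots = \gcd(F_1, F_2) = 1$. Since $n \mid F_{qd}$ and $\gcd(F_{qd}, F_{qd-1}) = 1$, any common divisor of $n$ and $F_{qd-1}$ must divide $1$, so $\gcd(n, F_{qd-1}) = 1$. Cancelling $F_{qd-1}$ from $n \mid F_{qd-1}\,F_r$ then gives $n \mid F_r$.

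Finally, with $0 \le r < d$ and $n \mid F_r$, the minimality of $d$ rules out $1 \le r < d$, forcing $r = 0$ and hence $d \mid L$, as desired. The delicate point is precisely the coprimality reduction $\gcd(n, F_{qd-1}) = 1$, since that is what licenses dividing out $F_{qd-1}$; without it the argument stalls at $n \mid F_{qd-1}\,F_r$. An alternative route that sidesteps this cancellation is to first prove the stronger identity $\gcd(F_m, F_s) = F_{\gcd(m,s)}$, run as a Fibonacci Euclidean algorithm from the same addition formula, and then note that $n \mid \gcd(F_d, F_L) = F_{\gcd(d,L)}$ together with $\gcd(d,L) \le d$ forces $\gcd(d,L) = d$, i.e.\ $d \mid L$.
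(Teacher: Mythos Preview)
Your argument is correct. The division-algorithm setup, the use of the addition formula $F_{m+s}=F_mF_{s+1}+F_{m-1}F_s$ with $m=qd$ and $s=r$, the reduction $n\mid F_{qd}$ via Lemma~\ref{Fibodivide}, the coprimality $\gcd(F_{qd},F_{qd-1})=1$ to cancel $F_{qd-1}$, and the final appeal to minimality of $d$ all go through without issue. The one boundary case worth a sentence is $n=1$, where $d=1$ and the conclusion is trivial; for $n\ge 2$ one has $d\ge 3$, so $qd-1\ge 1$ and the coprimality step is unproblematic.

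As for comparison with the paper: there is nothing to compare against, because the paper does not prove this lemma at all. It simply records the statement and cites \cite[Thm.~3]{Wa} (Wall, 1960). Your write-up therefore supplies what the paper outsources. The approach you give is in fact the classical one going back to Wall and to Lucas before him, and your alternative route via $\gcd(F_m,F_s)=F_{\gcd(m,s)}$ is equally standard. Either would serve as a self-contained replacement for the citation.
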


Lemma~\ref{Legendre} follows from Gauss' law of quadratic reciprocity.

\begin{lemma}
\label{Legendre}
Let $p$ be an odd prime. If $p\equiv \pm 1({\rm mod}\, 5)$ then $(\frac{5}{p})=1$.
If $p\equiv \pm 2({\rm mod}\, 5)$ then $(\frac{5}{p})=-1$.
\end{lemma}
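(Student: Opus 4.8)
The plan is to reduce the claim to a direct computation modulo $5$ via Gauss' law of quadratic reciprocity. First I would observe that the hypotheses already force $p\neq 5$, since $p\equiv\pm 1$ or $\pm 2({\rm mod}\,5)$ each exclude the class $p\equiv 0({\rm mod}\,5)$; hence both Legendre symbols appearing below are of the nonzero, well-defined kind. Because $5\equiv 1({\rm mod}\,4)$, the reciprocity sign $(-1)^{\frac{p-1}{2}\cdot\frac{5-1}{2}}$ has exponent $\frac{p-1}{2}\cdot 2$, which is even, so the sign equals $1$ and reciprocity collapses to the clean identity
\[
\left(\frac{5}{p}\right)=\left(\frac{p}{5}\right).
\]
This single step trades the symbol with the awkward numerator $5$ for one whose value depends only on $p({\rm mod}\,5)$.

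Next I would evaluate $\left(\frac{p}{5}\right)$ by listing the nonzero squares modulo $5$. Since $1^2\equiv 1$, $2^2\equiv 4$, $3^2\equiv 4$, and $4^2\equiv 1({\rm mod}\,5)$, the quadratic residues mod $5$ are exactly $\{1,4\}$ and the nonresidues are $\{2,3\}$. Rewriting these classes as $\{1,4\}=\{\pm 1\}$ and $\{2,3\}=\{\pm 2\}$ modulo $5$, I read off that $\left(\frac{p}{5}\right)=1$ precisely when $p\equiv\pm 1({\rm mod}\,5)$ and $\left(\frac{p}{5}\right)=-1$ precisely when $p\equiv\pm 2({\rm mod}\,5)$. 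Combining this with the identity from the first paragraph yields the two claimed values of $\left(\frac{5}{p}\right)$.

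There is no substantive obstacle here: the argument is entirely mechanical once reciprocity is invoked. The only point requiring a moment's care is verifying that the reciprocity sign is trivial, which is why I would flag explicitly that $\frac{5-1}{2}=2$ is even; were the prime in the denominator congruent to $3({\rm mod}\,4)$ rather than equal to $5$, the sign would depend on $p$ and the computation would split into further cases. Since $5\equiv 1({\rm mod}\,4)$, no such case analysis is needed, and no supplementary law for $\left(\frac{-1}{p}\right)$ or $\left(\frac{2}{p}\right)$ enters the proof.
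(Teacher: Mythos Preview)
Your argument is correct and is precisely the approach the paper indicates: the paper does not spell out a proof but simply states that the lemma follows from Gauss' law of quadratic reciprocity, and your proposal fills in exactly those details (trivial reciprocity sign since $5\equiv 1\pmod 4$, then computing residues mod $5$). There is nothing to add or correct.
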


The following lemma is new.

\begin{lemma}
\label{PrimeDivisorsPM1ModL}
Let $p$ be an odd prime and $L$ be a positive integer with $p|F_L$.
Assume $p\equiv \pm 1({\rm mod}\, L)$.
Then $p\equiv (\frac{5}{p})({\rm mod}\, L)$.
\end{lemma}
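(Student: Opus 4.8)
The plan is to play the two natural divisibility conditions on the rank of apparition of $p$ against each other. Write $z={\rm ord}_f(p)$ for the smallest positive index with $p\mid F_z$. The first ingredient is the fact recalled in the introduction, that $p\mid F_{p-(\frac{5}{p})}$ for every odd prime $p$; this is exactly Lemmas~\ref{pdivideFibo1} and \ref{pdivideFibo2} combined with Lemma~\ref{Legendre}, and it also holds when $p=5$ (there $(\frac{5}{p})=0$ and $p-(\frac{5}{p})=5$). Applying Lemma~\ref{WhichFibDivByp} to the hypothesis $p\mid F_L$ gives $z\mid L$, and applying it to $p\mid F_{p-(\frac{5}{p})}$ gives $z\mid p-(\frac{5}{p})$. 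These two divisibilities, together with the congruence hypothesis $p\equiv\pm 1({\rm mod}\,L)$, are all that is needed.

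Next I would split on the sign in $p\equiv\pm 1({\rm mod}\,L)$. Suppose first $p\equiv 1({\rm mod}\,L)$. Then $L\mid p-1$, and since $z\mid L$ this gives $z\mid p-1$; subtracting this from $z\mid p-(\frac{5}{p})$ yields $z\mid 1-(\frac{5}{p})$. As $1-(\frac{5}{p})\in\{0,1,2\}$, there are two possibilities: either $1-(\frac{5}{p})=0$, in which case $(\frac{5}{p})=1$ and hence $p\equiv 1\equiv(\frac{5}{p})({\rm mod}\,L)$ as desired; or $z\le 2$, which is impossible because $p\mid F_z$ while $F_1=F_2=1$. Suppose instead $p\equiv -1({\rm mod}\,L)$. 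Then $L\mid p+1$, so $z\mid p+1$, and subtracting $z\mid p-(\frac{5}{p})$ gives $z\mid 1+(\frac{5}{p})\in\{0,1,2\}$; the same dichotomy forces $(\frac{5}{p})=-1$ and hence $p\equiv -1\equiv(\frac{5}{p})({\rm mod}\,L)$. In both cases $p\equiv(\frac{5}{p})({\rm mod}\,L)$, completing the argument.

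I do not expect a genuine obstacle here: once the setup is in place the proof is a two-line gcd-style computation. The only points requiring a moment's care are the degenerate branch $z\le 2$, which is excluded precisely because $F_1=F_2=1$ are too small to be divisible by an odd prime, and invoking the auxiliary divisibility $p\mid F_{p-(\frac{5}{p})}$ in the form that also covers $p=5$ (one can alternatively observe directly that for $p=5$ the hypotheses $5\mid F_L$ and $5\equiv\pm 1({\rm mod}\,L)$ are jointly contradictory, since $z=5$ would then divide $4$ or $6$). Everything else is bookkeeping.
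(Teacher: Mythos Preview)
Your proof is correct. Both your argument and the paper's rest on the same lemmas, but the organization differs. The paper splits first on the residue of $p$ modulo $5$: assuming, say, $p\equiv\pm 1\pmod 5$ and (for contradiction) $p\equiv -1\pmod L$, it deduces $p\mid F_{p-1}$ and $p\mid F_{p+1}$, hence $p\mid F_p=F_{p+1}-F_{p-1}$, so that $p$ divides two consecutive Fibonacci numbers---a contradiction. You instead split on the sign in $p\equiv\pm 1\pmod L$ and work uniformly through the rank of apparition $z$, reducing to $z\mid 1\mp(\frac{5}{p})\in\{0,1,2\}$ and ruling out $z\le 2$ via $F_1=F_2=1$. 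Your route is a bit more economical---it absorbs $p=5$ into the main argument and avoids the recurrence identity $F_{p+1}-F_{p-1}=F_p$---while the paper's version makes the ``two consecutive Fibonacci numbers'' obstruction explicit; the underlying content is the same.
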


\begin{proof}
Let $p=5$. Then from Lemmas~\ref{Fibodivide} and~\ref{WhichFibDivByp}, $p|F_L$ if and only if $5|L$.
So the result is vacuously true as $p\not\equiv \pm 1({\rm mod}\, L)$.

Assume $p\neq 5$ is an odd prime, that $p|F_L$, and $p\equiv \pm 1({\rm mod}\, L)$.
Let $p\equiv \pm 1({\rm mod}\, 5)$. From Lemma~\ref{pdivideFibo1} we have $p|F_{p-1}$.
Assume $p\equiv -1({\rm mod}\, L)$. Then $L|p+1$.
From Lemma~\ref{Fibodivide} we have $F_L|F_{p+1}$. Since $p|F_L$ we have $p|F_{p+1}$.
As $p|F_{p+1}$ and $p|F_{p-1}$ we have $p|(F_{p+1}-F_{p-1})=F_p$. Since $p$ divides
two consecutive Fibonacci numbers, $p$ divides all Fibonacci numbers - a contradiction.
So $p\equiv 1({\rm mod}\, L)$. The result follows from Lemma~\ref{Legendre}.

Similarly, assuming $p\equiv \pm 2({\rm mod}\, 5)$ gives $p\equiv -1 = (\frac{5}{p})({\rm mod}\, L)$, though we use Lemma~\ref{pdivideFibo2} instead of Lemma~\ref{pdivideFibo1}.
\end{proof}

In other words, consider the odd prime divisors of $F_L$  that are $\pm 1({\rm mod}\, L)$.
Those that are $1({\rm mod}\, L)$ are those that are  $\pm 1({\rm mod}\, 5)$. Those that are $-1({\rm mod}\, L)$
are those that are $\pm 2({\rm mod}\, 5)$. We are now ready to prove Theorem~\ref{MainTheorem}.

\begin{proof}
For each $i$ we have $p_i|F_L$ and $p_i\equiv \pm 1({\rm mod}\, L)$. So from Lemma~\ref{PrimeDivisorsPM1ModL}, for each $i$ we have $p_i\equiv (\frac{5}{p_i})
({\rm mod}\, L)$.
Taking the product of both sides over all $i$ gives $P\equiv (\frac{5}{P})
({\rm mod}\, L)$. Thus $L|(P-(\frac{5}{P}))$. From
Lemma~\ref{Fibodivide}
we have $F_L | F_{P-(5/P)}$. Since $P|F_L$ we get $P|F_{P-(\frac{5}{P})}$.
\end{proof}

Note, that the construction described in Theorem~\ref{MainTheorem} is related to, though not
the same as, the construction in \cite{CG} of Fibonacci pseudoprimes.

\section{The search for a Baillie-PSW pseudoprime}
\label{620}

There is a \$620 prize for a Baillie-PSW pseudoprime or a proof that none exists. That is an odd Fibonacci pseudoprime that is $\pm 2({\rm mod}\, 5)$
and also a base-2 pseudoprime. This problem was originally posed in \cite{PSW}.
Jan Feitsma and William Galway (see \cite{FG}) have computed all base 2 pseudoprimes up to $2^{64}$. Sam
Wagstaff has checked all of those to determine if there were any \$620 winners and there
were none (see  \cite{Po2}). The search is also described in \cite{BW,CG,CP,MK,Po1,PSW,SW}.

Fix a Fibonacci number $F_L$.
For $i=-1, 1$ we let $S_i$ be the set of odd prime divisors of $F_L$ that are congruent to
$i$ modulo $L$.
Recall that the primes in $S_{-1}$ are $\pm 2({\rm mod}\, 5)$ and
those in $S_1$ are $\pm 1({\rm mod}\, 5)$.
When possible, we created products of at least two distinct primes
from $S_{-1}\cup S_1$
such that the product contains an odd
number of primes from $S_{-1}$. That way the product is $\pm 2({\rm mod}\, 5)$.
For example, for $F_{258}$  we have $|S_{-1}|=5$ and $|S_1|=4$.
Thus we can create ${5\choose 1}\cdot (2^4-1)$ $+{5\choose 3}\cdot 2^4$ $+ {5\choose 5}\cdot
2^4$ $=251$ different products, all of which are odd Fibonacci pseudoprimes that are $\pm 2({\rm mod}\, 5)$.

For our search, we used the complete and partial factorizations of Fibonacci numbers
$F_L$ into prime divisors for $1\leq L\leq 9999$ found in \cite{FF}.
Using the construction described in the previous paragraph and these factorizations
we created approximately $2^{30}$ odd Fibonacci pseudoprimes that are $\pm 2({\rm mod}\, 5)$.
We then checked each to see if the Fibonacci pseudoprime $P$ is a base-2 pseudoprime, i.e.\ if $2^P\equiv 2({\rm mod}\, P)$.
Alas, none were base-2 pseudoprimes.

Some of the Fibonacci pseudoprimes we created are huge.
For example $F_{9967}$ is squarefree and all of its prime divisors are $\pm 1 ({\rm mod}\, 9967)$. Therefore $F_{9967}$
is a Fibonacci pseudoprime and it is
$-2({\rm mod}\, 5)$.
Note $F_{9967}\approx 2^{6918}$.

If $P=\prod_{i=1}^k p_i$ is a huge Fibonacci pseudoprime, then using the relatively fast repeated squares algorithm
to reduce $2^P({\rm mod}\, P)$ can still be quite slow.
Instead we note that by the Chinese Remainder Theorem,
$2^P\equiv 2({\rm mod}\, P)$ if and only if
$2^P\equiv 2({\rm mod}\, p_i)$ for each $i$.
We order the prime divisors of $P$ as $p_1 < p_2 < \ldots < p_k$.
First we reduce $2^P({\rm mod}\, p_1)$. If the remainder is not 2, as is usually the
case, then we know
$P$ is not a base-2 pseudoprime. If the remainder is 2, we do the same computation for
$p_2$ and so on. As soon as we get a remainder that is not 2, we can quit. If all $r$
remainders are, in fact 2, then $P$ is a base-2 pseudoprime (which
never occurred for us).

We can speed up the computation of $2^P({\rm mod}\, p_i)$ by noting that if
$P\equiv r_i ({\rm mod}\, p_i-1)$, with $0\leq r_i < p_i-1$, then
$2^P\equiv 2^{r_i}({\rm mod}\, p_i)$, which is a
consequence of Fermat's Little Theorem. So instead we can evaluate $2^{r_i}({\rm mod}\, p_i)$.
To determine $r_i$ we can iteratively multiply together $((p_1\cdot p_2)\cdot p_3)\ldots$,
reducing modulo $p_i-1$ after each multiplication. That way, the largest integer every appearing in the algorithm is at most $p_ip_k$.

\end{document}